\newtheorem{theorem}{Theorem}[section]
\newtheorem{proposition}[theorem]{Proposition}
\newtheorem{remark}[theorem]{Remark}
\definecolor{subsectioncolor}{RGB}{1,1,0}
\newcommand{\setdef}[2]{\{#1 \; : \; #2\}}
\newcommand{\until}[1]{\{1,\dots,#1\}}
\newcommand{\Uc}{\mathcal{U}}
\newcommand{\real}{\mathbb{R}}
\newcommand{\integerspos}{\mathbb{Z}_{> 0}}
\newcommand{\ubar}[1]{\underaccent{\bar}{#1}}
\newcommand{\Sc}{\mathcal{S}}
\newcommand{\Kc}{\mathcal{K}}
\newcommand{\Zc}{\mathcal{Z}}
\newcommand{\Pc}{\mathcal{P}}
\newcommand{\Mc}{\mathcal{M}}
\newcommand{\Ic}{\mathcal{I}}
\newcommand{\Nc}{\mathcal{N}}
\newcommand{\argmax}[2] {\mathrm{arg}\max_{#1}#2}
\DeclareSymbolFont{bbold}{U}{bbold}{m}{n}
\DeclareSymbolFontAlphabet{\mathbbold}{bbold}
\newcommand{\norm}[1]{\left\lVert#1\right\rVert}
\newcommand{\CLC}{\operatorname{CLC}}
\newcommand{\CBC}{\operatorname{CBC}}
\newcommand{\CVaR}{\operatorname{CVaR}}
\newcommand{\oprocendsymbol}{\hbox{$\bullet$}}
\newcommand{\oprocend}{\relax\ifmmode\else\unskip\hfill\fi\oprocendsymbol}
\newcommand*{\QEDA}{\hfill\ensuremath{\blacksquare}}%
\newcommand{\xqed}[1]{%
  \leavevmode\unskip\penalty9999 \hbox{}\nobreak\hfill
  \quad\hbox{#1}}
\newcommand{\demo}{\xqed{$\bullet$}}
\newcounter{countitems}
\newcounter{nextitemizecount}
\newcommand{\setupcountitems}{%
  \stepcounter{nextitemizecount}%
  \setcounter{countitems}{0}%
  \preto\item{\stepcounter{countitems}}%
}
\newcommand{\computecountitems}{%
  \edef\@currentlabel{\number\c@countitems}%
  \label{countitems@\number\numexpr\value{nextitemizecount}-1\relax}%
}
\newcommand{\nextitemizecount}{%
  \getrefnumber{countitems@\number\c@nextitemizecount}%
}
\newcommand{\previtemizecount}{%
  \getrefnumber{countitems@\number\numexpr\value{nextitemizecount}-1\relax}%
}
\computecountitems\ifnumcomp{\previtemizecount}{>}{4}{\end{multicols}}{}}
\newcommand{\longthmtitle}[1]{\mbox{}\emph{(#1):}}
\newcommand{\comment}[1]{} 
\newcolumntype{P}[1]{>{\centering\arraybackslash}p{#1}}
\title{ \huge \bf Feasibility Analysis and Regularity Characterization \\ of Distributionally Robust Safe Stabilizing Controllers}
\author{ \parbox{\linewidth}{ \centering Pol Mestres \qquad Kehan Long \qquad Nikolay Atanasov \qquad Jorge Cort{\'e}s }%
\thanks{The authors are with the Contextual Robotics Institute, UC San Diego (e-mails: \{pomestre,k3long,natanasov,cortes\}@ucsd.edu).}%
\thanks{The authors gratefully acknowledge support from NSF under grants IIS-2007141 and CCF-2112665.}%
}
\begin{document}

\maketitle
\thispagestyle{empty}
\pagestyle{empty}

\begin{abstract}
This paper studies the well-posedness and regularity of safe stabilizing optimization-based controllers for control-affine systems in the presence of model uncertainty.
When the system dynamics contain unknown parameters, a finite set of samples can be used to formulate distributionally robust versions of  control barrier function and control Lyapunov function constraints. Control synthesis with such distributionally robust constraints can be achieved by solving a (convex) second-order cone program (SOCP). We provide one necessary and two sufficient conditions to check the feasibility of such optimization problems, characterize their computational complexity and numerically show that they are significantly faster to check than direct use of SOCP solvers.
Finally, we also analyze the regularity of the resulting control laws.
\end{abstract}

\begin{IEEEkeywords}
    Safety-critical control, control barrier functions, distributionally robust control synthesis.
\end{IEEEkeywords}

\section{Introduction}

\IEEEPARstart{R}{ecent} years have seen increasing deployment of control systems and robots to aid transportation, warehouse management, and home automation. In these applications, it is crucial to implement controllers with provable safety and stability guarantees despite uncertainty in the system models and operational conditions. Recent work~\cite{KL-VD-ML-JC-NA:22-ral,FC-JJC-BZ-CJT-KS:21,AJT-VDD-SD-BR-YY-ADA:21,PM-JC:23-auto,AL-AB-ND-SH:23,PS-MJ-EH:22}
%
%
tackles this when some prior information about the uncertainty is known. Instead, here we rely on a line of work initiated in~\cite{KL-YY-JC-NA:23-acc} 
that circumvents the need for knowledge about the uncertainty distribution and uses only uncertainty samples to formulate distributionally robust constraints for control synthesis. This approach is robust to distributional shift at deployment time and enjoys provable out-of-sample performance. However, it also introduces several challenges, which we focus on here: the characterization of the quality and number of uncertainty samples needed to guarantee the feasibility of the safety and stability constraints, and the study of the regularity properties of the resulting controllers.

\subsubsection*{Literature Review}
Control Lyapunov functions (CLFs) \cite{EDS:98} are a well-established tool to design stabilizing controllers for nonlinear systems. More recently, control barrier functions (CBFs) \cite{ADA-SC-ME-GN-KS-PT:19} have gained popularity as a tool to render a desired subset of the system state space safe. If the system is control affine,  CLF and CBF constraints are linear in the control input and can be incorporated in a quadratic program (QP) \cite{ADA-XX-JWG-PT:17} that, if feasible, can be solved efficiently to obtain control inputs guaranteeing safety and stability. Recent work has explored alternative optimization formulations when the system model is uncertain. Under the assumption that the uncertainty follows a Gaussian Process (GP) or satisfies worst-case bounds,\cite{FC-JJC-BZ-CJT-KS:21,FC-JJC-BZ-CJT-KS:21-acc,KL-CQ-JC-NA:21-ral,KL-VD-ML-JC-NA:22-ral,AJT-VDD-SD-BR-YY-ADA:21,AL-AB-ND-SH:23}
%
%
formulate second-order cone constraints
that can be used to design controllers achieving safe stabilization of the true system. The paper \cite{PM-JC:23-auto} gives sufficient conditions for the feasibility of such second-order cone constraints. Our work here is closely related to~\cite{KL-YY-JC-NA:23-acc}, which leverages ideas from distributionally robust optimization (DRO) \cite{AB-LEG-AN:09,PME-DK:18} to model the uncertainty. The DRO framework constructs an \textit{ambiguity set} of probability distributions that contains the true (unknown) one with high confidence. Such ambiguity sets are constructed with only finitely many samples and are used to formulate distributionally robust versions of the control design problem.

\subsubsection*{Statement of Contributions}

We study the problem of safe stabilization of control-affine systems under uncertainty. We assume that the distribution of the uncertainty is unknown and formulate a second-order cone program (SOCP) using distributionally robust versions of the CLF and CBF constraints constructed on the basis of uncertainty samples.
Our first contribution is the derivation of a necessary condition and two sufficient conditions for the feasibility of the optimization problem. 
We characterize the computational complexity of these conditions and show that, for a large number of samples, it is significantly smaller than solving the SOCP directly, which makes them useful to efficiently check whether the problem is feasible without having to solve it.
Our first sufficient condition is dependent on the quality of the uncertainty samples but is limited to a single control objective. Our second sufficient condition is only dependent on the number of samples but can be used for any number of constraints.
Our final contribution shows that the solution of this distributionally robust optimization problem is point-Lipschitz, and hence continuous, which means that solutions of the closed-loop system are guaranteed to exist and the controller obtained from it can be implemented
without inducing chattering.

\section{Preliminaries}
We review distrib. robust chance-constrained programs and control Lyapunov and barrier functions under~uncertainty.

\subsection{Distributionally Robust Chance Constrained Programs}\label{sec:drccp}


Given a random vector $\boldsymbol{\xi}$ following distribution $\mathbb{P}^*$ supported on set $\Xi\subseteq\real^k$ and a closed convex set $\Zc\subset\real^n$, let $G :\Zc\times\Xi\to\real$ define a probabilistic constraint $G(z,\boldsymbol{\xi})\leq0$. We are interested in satisfying this constraint with a prescribed confidence $1-\epsilon$, with $\epsilon\in(0,1)$, while minimizing a convex objective function $c:\Zc\to\real$. To achieve this\footnote{We denote by $\mathbb{Z}_{>0}$, $\real$ and $\real_{\geq0}$ the set of positive integers, real, and nonnegative real numbers, resp. We denote by $\textbf{0}_{n}$ the $n$-dimensional zero vector. We write $\partial \Sc$ for the boundary of the set $\Sc$. Given $N\in\mathbb{Z}_{>0}$, we denote $[N]=\{1,\hdots,N\}$. Given $x\in\real^n$, $\norm{x}$ denotes the Euclidean norm of $x$.
For $x \in \real$, we define $(x)_{+}=\max(x,0)$.
A function $\beta:\real_{\geq0}\to\real$ is of class $\Kc_{\infty}$ if $\beta(0)=0$, $\beta$ is strictly increasing and $\lim\limits_{t\to\infty}\beta(t)=\infty$. A function $V:\real^n\to\real$ is positive definite if $V(0)=0$ and $V(x) > 0$ for all $x\neq0$, and proper in a set $\Gamma$ if $\setdef{x\in\Gamma}{V(x)\leq c}$ is compact for any $c\geq0$. 
Given an $m\times n$ matrix $A$ and two integers $i, j$ such that $1\leq i < j\leq m$, $A_{i:j}$ denotes the $(j-i+1)\times n$ matrix obtained by selecting the rows from $i$ to $j$ of $A$.
A function $f:\real^n\to\real^q$ is point-Lipschitz at a point $x_0\in\real^n$ if there exists a neighborhood $U$ of $x_0$ and a constant $L_{x_0}>0$ such that $\norm{f(x)-f(x_0)}\leq L_{x_0}\norm{x-x_0}$ for all $x\in U$.}, define the chance-constrained program:
\begin{align}\label{eq:original-chance-constraint}
&\min\limits_{z\in\Zc} c(z) \\
\notag
\text{s.t.} & \ \mathbb{P}^*(G(z,\boldsymbol{\xi})\leq0)\geq 1-\epsilon.
\end{align}
The feasible set of \eqref{eq:original-chance-constraint} is not convex in general. Nemirosvski and Shapiro \cite[Section 2]{AN-AS:06} propose a convex approximation of the feasible set of \eqref{eq:original-chance-constraint} by replacing the chance constraint with a conditional value-at-risk ($\CVaR$) constraint.
CVaR of $G(z,\boldsymbol{\xi})$ can be formulated as the following convex program:
%
\begin{align}\label{eq:cvar-convex}
\CVaR_{1-\epsilon}^{\mathbb{P}^*}(G(z,\boldsymbol{\xi})) :=\inf\limits_{t\in\real}[\epsilon^{-1} \mathbb{E}_{\mathbb{P}^*}[(G(z,\boldsymbol{\xi})+t)_{+}] \!-\! t].
\end{align}
The resulting problem
\begin{align}\label{eq:cvar-pstar}
&\min\limits_{z\in\Zc} c(z) \\
\notag
\text{s.t.} & \ \CVaR_{1-\epsilon}^{\mathbb{P}^*}(G(z,\boldsymbol{\xi}))\leq0,
\end{align}
is convex and its feasible set is contained in that of \eqref{eq:original-chance-constraint}.

Both \eqref{eq:original-chance-constraint} and~\eqref{eq:cvar-pstar} assume that $\mathbb{P}^*$ is known. Instead, suppose that it is unknown and we only have access to samples $\{ \boldsymbol{\xi}_i\}_{i\in[N]} $ from $\mathbb{P}^*$. We describe a way of constructing a set of distributions that could have generated the samples.
Let $\Pc_p(\Xi)$ be the set of probability measures with finite $p$-th moment supported on $\Xi$. 
Let $\hat{\mathbb{P}}_N:=\frac{1}{N}\sum_{i=1}^N \delta_{\boldsymbol{\xi}_i}$ be the empirical distribution constructed from the samples $\{\boldsymbol{\xi}_i\}_{i=1}^N$. Let $W_p$ be the $p$-Wasserstein distance~\cite[Definition 3.1]{PME-DK:18}
between two probability measures in $\Pc_p(\Xi)$ and let $\Mc_N^r := \setdef{\mu\in\Pc_p(\Xi)}{W_p(\mu,\hat{\mathbb{P}}_N)\leq r}$ be the ball of radius $r$ centered at $\hat{\mathbb{P}}_N$. We define a distributionally robust chance-constrained program:
\begin{align}\label{eq:drccp}
&\min\limits_{z\in\Zc} c(z) \\
\notag
\text{s.t.} &\inf_{\mathbb{P}\in\Mc_N^r} \mathbb{P}(G(z,\boldsymbol{\xi})\leq0)\geq 1-\epsilon.
\end{align}
We can use CVaR to obtain a convex conservative approximation of~\eqref{eq:drccp}:
\begin{align}\label{eq:cvar-drccp}
&\min_{z\in\Zc} c(z) \\
\notag
\text{s.t.} &\sup_{\mathbb{P}\in\Mc_N^r} \CVaR_{1-\epsilon}^{\mathbb{P}}(G(z,\boldsymbol{\xi}))\leq 0.
\end{align}
If~\eqref{eq:cvar-drccp} is feasible, then~\eqref{eq:drccp} is also feasible~\cite[Section 2]{AN-AS:06}.

We say that a distribution $\mathbb{P}$ is light-tailed if there exists $a>0$ such that $A:=\mathbb{E}_{\mathbb{P}}[\exp{\norm{\boldsymbol{\xi}}^a}]=\int_{\Xi}\exp{\norm{\boldsymbol{\xi}}^a}\mathbb{P}(d\boldsymbol{\xi})<\infty$. If $\mathbb{P}^*$ is light-tailed, the following observation specifies how the radius of $\Mc_N^r$ should be selected so that the true distribution lies in the ball with high confidence.

\begin{remark}\longthmtitle{Choice of Wasserstein ball radius}\label{rem:choice-r}
    If the true distribution $\mathbb{P}^*$ is light-tailed, 
    the choice of $r=r_N(\bar{\epsilon}) $ given in~\cite[Theorem 3.5]{PME-DK:18},
    \begin{align}
    \label{eq:wasserstein_r_guarantee}
        r_N(\bar{\epsilon}) = \begin{cases}
            (\frac{\log(c_1\bar{\epsilon}^{-1})}{c_2 N})^{\frac{1}{\max\{k,2\}}} \quad &\text{if} \ N\geq \frac{\log(c_1\bar{\epsilon}^{-1})}{c_2}, \\
            (\frac{\log(c_1\bar{\epsilon}^{-1})}{c_2 N})^{\frac{1}{a}} \quad &\text{else},
        \end{cases}
    \end{align}
    where $c_1, c_2$ and $a$ are positive constants that only depend on 
    $a$, $A$ and $k$ (cf.~\cite[Theorem 3.4]{PME-DK:18}), ensures that the ball $\Mc_N^{r_{N}(\bar{\epsilon})}$ contains $\mathbb{P}^*$ with probability at least $1-\bar{\epsilon}$. Then, a solution $z^*$ of~ \eqref{eq:cvar-drccp} satisfies the constraint $\CVaR_{1-\epsilon}^{\mathbb{P}^*}(G(z^*,\boldsymbol{\xi}))\leq0$
    with probability at least $1-\bar{\epsilon}$. 
    Note that $c_1$, $c_2$ and $a$ can be computed by knowing the class of distributions to which $\mathbb{P}^*$ belongs to, without having actual knowledge of $\mathbb{P}^*$. 
    If exact values are not known, but upper and lower bounds are, these can be used instead to compute an upper bound of $r_N(\bar{\epsilon})$.
    \demo
\end{remark}

\begin{remark}\longthmtitle{Choice of $\epsilon$}\label{rem:epsilon-assumption}
   The parameter $\epsilon$ determines the confidence level $1-\epsilon$ for constraint satisfaction. Throughout the paper, we assume $\epsilon\leq\frac{1}{N}$, albeit results are valid generally, with explicit expressions becoming more involved.
    \demo
\end{remark}

\subsection{Distributionally Robust Safety and Stability}\label{subsec:dr-safety-stability}
%
%
The notions of CLF~\cite{EDS:98} and CBF~\cite{ADA-SC-ME-GN-KS-PT:19} can be used to design controllers in uncertainty-free systems that enforce stability and safety, resp. Here we extend these notions for systems with uncertainty in the dynamics. Consider a nominal model $F$
and a linear combination of $k$ perturbations,
\begin{align}\label{eq:uncertain-model}
\dot{x} &= (F(x) + \sum_{j=1}^k W_j(x)\xi_j)\ubar{u},
\end{align}
where for $1\leq j \leq k$, $W_j(x)\in\real^{n\times(m+1)}$ denotes known model perturbations, and $\xi_j\in\real$ denotes the corresponding unknown weight, and $\ubar{u}=[1; u] \in \ubar{\Uc}:=\{1\}\times\real^m$.
We let
$\boldsymbol{\xi}=[\xi_{1},\xi_{2},\hdots,\xi_{k}]^T  \in \Xi\subseteq\real^{k}$. 
We assume that $\boldsymbol{\xi}$ follows an unknown distribution $\mathbb{P}^*$
but a set of samples $\{\boldsymbol{\xi}_i\}_{i=1}^N$ is available. We are interested in extending the notions of CLF \cite{EDS:98} and CBF \cite{ADA-SC-ME-GN-KS-PT:19} for systems of the form~\eqref{eq:uncertain-model}.
To do so, note that as shown in \cite[Section IV]{KL-YY-JC-NA:23-acc}, the CBF condition for a system of the form \eqref{eq:uncertain-model} and a function $h:\real^n\to\real$ reads as $\CBC(x,\ubar{u},\boldsymbol{\xi}):=\ubar{u}^T q_h(x)+\ubar{u}^T R_h(x) \boldsymbol{\xi} \geq 0$, where the exact forms of $q_h$ and $R_h$ are given in \cite[Section IV]{KL-YY-JC-NA:23-acc} and depend on $h$ and its gradient. Now, since $\boldsymbol{\xi}$ follows a distribution $\mathbb{P}^*$, we extend the definition of CBF by requiring that for all $x$ in the safe set, there exist $\ubar{u}\in\ubar{\Uc}$ such that
\begin{align}\label{eq:p-clc}
    \mathbb{P}^*(\CBC(x,\ubar{u},\boldsymbol{\xi})\geq 0) \geq 1-\epsilon.
\end{align}
The CLF condition for~\eqref{eq:uncertain-model} takes a similar form and is written as $\CLC(x,\ubar{u},\boldsymbol{\xi})\leq0$ (cf.~\cite[Section IV]{KL-YY-JC-NA:23-acc}). As shown in Section~\ref{sec:drccp}, $\CVaR$ can be used as a convex approximation of~\eqref{eq:p-clc} and its analogue with $\CLC$. We use
\begin{subequations}\label{eq:cvar-cbc-clc}
\begin{align}
    \CVaR_{1-\epsilon}^{\mathbb{P}^*}(\CBC(x,\ubar{u},\boldsymbol{\xi}))\geq 0, \\
    \CVaR_{1-\epsilon}^{\mathbb{P}^*}(\CLC(x,\ubar{u},\boldsymbol{\xi}))\geq 0,
\end{align}
\end{subequations}
as the distributionally robust analogues of the CLF and CBF conditions from~\cite{EDS:98} and~\cite{ADA-SC-ME-GN-KS-PT:19}, resp. The existence of a controller satisfying~\eqref{eq:cvar-cbc-clc} implies the existence of a controller that makes the CLC (resp. the CBC) condition hold at every point with probability at least $1-\epsilon$,
paving the way for the design of controllers  that make the system stable (resp. safe) with arbitrarily high probability.

\section{Problem Statement}

Consider the system model in \eqref{eq:uncertain-model} with distributional uncertainty, meaning that the true distribution $\mathbb{P}^*$ of the parameter $\boldsymbol{\xi}$ is unknown. We assume that the system admits a CLF and a CBF, which allow us to formulate the constraints~\eqref{eq:cvar-cbc-clc}. Given a nominal controller specified by a smooth function $\ubar{k}:\real^n\to\ubar{\mathcal{U}}$, we would like to synthesize a controller closest to it that respects safety and stability constraints. Using \eqref{eq:cvar-convex}, this problem can be written in general form as
\begin{align}\label{eq:dr-general}
&\min\limits_{\ubar{u}\in\ubar{\mathcal{U}}} \norm{\ubar{u}-\ubar{k}(x)}^2 \\
\notag
\text{s.t.} &\sup_{\mathbb{P}\in\Mc_N^r} \inf_{t\in\real}[\epsilon^{-1}\mathbb{E}_{\mathbb{P}}[(G_l(x,\ubar{u},\boldsymbol{\xi})+t)_{+}]-t]\leq 0, \ \forall l\in[M],
\end{align}
where $M\in\integerspos$ and each $G_l:\real^n\times\ubar{\Uc}\times\Xi\to\real$ is an affine function in $\ubar{u}$ and $\boldsymbol{\xi}$,
$G_l(x,\ubar{u},\boldsymbol{\xi})=\ubar{u}^T q_l(x) + \ubar{u}^T R_l(x)\boldsymbol{\xi}$,
for smooth functions $q_l:\real^n\to\real^{m+1}$ and $R_l:\real^n\to\real^{(m+1)\times k}$. With $M=2$ and constraints corresponding to $\CBC$ and $\CLC$, this corresponds to a stable and safe control synthesis problem. The case $M=1$ with the constraint $\CBC$ corresponds to a distributionally robust version of a safety filter of $\ubar{k}$.

Although the constraints in~\eqref{eq:dr-general} are convex, the program is intractable due to the search of suprema over the Wasserstein set. Fortunately, \cite[Proposition IV.1]{KL-YY-JC-NA:23-acc} shows that when $\Xi=\real^k$ and $p=1$,
the following SOCP is equivalent to~\eqref{eq:dr-general}: 
\begin{subequations}
\begin{align}
&\min_{\ubar{u}\in\ubar{\Uc},y\in\real,t\in\real,s_i\in\real} y \\
\text{s.t.} & \quad r\norm{R_l^T(x)\ubar{u}} + \frac{1}{N}\sum_{i=1}^N s_i - t\epsilon \leq 0,\ \forall l\in[M],~\label{eq:subeq1-socc} \\
&\quad s_i \geq G_l(x,\ubar{u},\boldsymbol{\xi}_i) + t, \ \forall i\in[N], \ \forall l\in[M],~\label{eq:subeq:si-gl} \\
&\quad s_i\geq0, \ \forall i\in[N],~\label{eq:subeq1-si} \\
&\quad y+1\geq\sqrt{\norm{2(\ubar{u}-\ubar{k}(x))}^2 + (y-1)^2}~\label{eq:subeq1-y}.
\end{align}
\label{eq:dro-clf-cbf-socp}
\end{subequations}
We refer to \eqref{eq:dro-clf-cbf-socp} as the DRO-SOCP and take $\Xi=\real^k$ and $p=1$ Wasserstein distance throughout the paper.


A critical observation about problem~\eqref{eq:dro-clf-cbf-socp} is that, in general, it might be infeasible, leading to controllers that are undefined. 
    
Furthermore, even if the problem is feasible, the controller obtained from it might not be continuous, hence resulting in implementation problems (it might induce chattering behavior when implemented on physical systems) and theoretical problems (lack of existence of solutions of the closed-loop system). 
Hence, our goal in this paper is twofold.
First, we derive conditions to ensure the feasibility of~\eqref{eq:dro-clf-cbf-socp}. Given the complexity of obtaining characterizations for the feasibility of such problems, we focus on identifying conditions that are easy to evaluate computationally as opposed to directly attempting to solve the optimization problem: either sufficient conditions, to quickly ensure feasibility, or necessary, to quickly discard it.
Second, assuming that the problem \eqref{eq:dro-clf-cbf-socp} is feasible, we characterize the regularity properties of the resulting controller.

\section{Feasibility Analysis}

In this section, we study the feasibility properties of~\eqref{eq:dro-clf-cbf-socp}. We start by giving a necessary condition for its feasibility.

\begin{proposition}\longthmtitle{Necessary condition for feasibility of DRO-SOCP}\label{prop:nec-cond-feasibility-1-constraint}
Let $\epsilon\in (0,\frac{1}{N}]$ and $r>0$. For $x\in\real^n$, let
\begin{align*}
    &\bar{Q}_l(x) = r R_l(x)_{2:(m+1)} \in \real^{m\times k}, \ \bar{r}_l(x) = r R_l(x)_1 \in \real^{1\times k}, \\
    &\bar{w}_{l,i}(x) = (-\epsilon q_l(x) - \epsilon R_l(x)\boldsymbol{\xi}_i)_{2:(m+1)} \in \real^m, \\
    &\bar{v}_{l,i}(x) = (-\epsilon q_l(x) - \epsilon R_l(x)\boldsymbol{\xi}_i)_1 \in \real, \\
    &\bar{F}_{l,i}(x) = \bar{Q}_l(x) \bar{Q}_l(x)^T-\bar{w}_{l,i}(x) \bar{w}_{l,i}(x)^T \in \real^{m \times m}, \\
    &\bar{J}_{l,i}(x) = \bar{r}_l(x) \bar{Q}_l(x)^T - \bar{v}_{l,i}(x)\bar{w}_{l,i}^T \in \real^{1\times m} , \\
    &\bar{H}_{l,i}(x) = \begin{pmatrix}
        \bar{r}_l \bar{r}_l^T - \bar{v}_{l,i}^2)(x) & \bar{J}_{l,i}(x) \\
        \bar{J}_{l,i}^T(x)  & \bar{F}_{l,i}(x)
    \end{pmatrix} \in \real^{(m+1)\times(m+1)}
\end{align*}
for $l\in[M]$ and $i\in[N]$. Let $\bar{\lambda}_{l,i}(x)$ be the minimum eigenvalue of $\bar{F}_{l,i}(x)$ and 
suppose $\bar{Q}_{l}(x)\bar{Q}_{l}(x)^T$ is invertible for all $l\in[M]$.
If~\eqref{eq:dro-clf-cbf-socp} is feasible,
then for each $l\in[M]$, there exists $i\in[N]$ such that $\bar{H}_{l,i}(x)$ is not positive definite and one of the following holds:
\begin{enumerate}
    \item $\bar{\lambda}_{l,i}(x)\! < \! 0$,
    \item $\bar{\lambda}_{l,i}(x) \! > \! 0$ and $\big( \bar{v}_{l,i}-\bar{w}_{l,i}^T \bar{F}_{l,i}^{-1}( \bar{Q}_{l} \bar{r}_{l}^T \! - \! \bar{w}_{l,i} \bar{v}_{l,i} ) \big)(x) \! \geq \! 0$,
    \item $\bar{\lambda}_{l,i}(x)=0$, and $\big(\bar{v}_{l,i}-\bar{w}_{l,i}^T(\bar{Q}_l\bar{Q}_l^T)^{-1}\bar{Q}_{l} \bar{r}_l^T \big)(x) > 0$.
\end{enumerate}
\end{proposition}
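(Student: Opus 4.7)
My plan is to reduce feasibility of the DRO-SOCP~\eqref{eq:dro-clf-cbf-socp} to a scalar second-order cone inequality in $\ubar{u}$ alone, translate that inequality into the matrix statement via squaring, and case-split on the sign of $\bar{\lambda}_{l,i}(x)$ to extract one of (i)--(iii).

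For the reduction, I exploit $\epsilon\leq 1/N$ to minimize out the auxiliary variables $t$ and $s_i$. The tightest choice subject to~\eqref{eq:subeq:si-gl}--\eqref{eq:subeq1-si} is $s_i=(G_l(x,\ubar{u},\boldsymbol{\xi}_i)+t)_+$, turning~\eqref{eq:subeq1-socc} into $r\|R_l(x)^T\ubar{u}\|+\frac{1}{N}\sum_i(G_l+t)_+-t\epsilon\leq 0$. The $t$-dependent expression is piecewise linear with slope $|\{i:G_l+t\geq 0\}|/N-\epsilon$, which is negative for $t<-\max_i G_l$ and non-negative afterwards, so its minimum is attained at $t^*=-\max_i G_l$ with value $\epsilon\max_i G_l$. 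Picking $i^*(l)\in\arg\max_i G_l(x,\ubar{u},\boldsymbol{\xi}_i)$, feasibility forces $\epsilon G_l(x,\ubar{u},\boldsymbol{\xi}_{i^*(l)})+r\|R_l(x)^T\ubar{u}\|\leq 0$, which in the notation of the proposition reads $\bar{v}_{l,i^*(l)}+\bar{w}_{l,i^*(l)}^T u\geq \|\bar{r}_l^T+\bar{Q}_l^T u\|$. Squaring (valid since both sides are non-negative) and using $\ubar{u}=(1,u^T)^T$ rearranges to $\ubar{u}^T\bar{H}_{l,i^*(l)}(x)\ubar{u}\leq 0$, so $\bar{H}_{l,i^*(l)}(x)$ is not positive definite.

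With $\phi(u):=\ubar{u}^T\bar{H}_{l,i^*(l)}\ubar{u}=\|\bar{r}_l^T+\bar{Q}_l^T u\|^2-(\bar{v}_{l,i^*(l)}+\bar{w}_{l,i^*(l)}^T u)^2=:\beta(u)^2-\psi(u)^2$, I case-split on $\bar{\lambda}_{l,i^*(l)}(x)$. If negative, condition (i) holds at once. If positive, $\phi$ is strictly convex with unique minimizer $u^*=-\bar{F}_{l,i^*(l)}^{-1}(\bar{Q}_l\bar{r}_l^T-\bar{w}_{l,i^*(l)}\bar{v}_{l,i^*(l)})$, and I argue by contradiction that $\psi(u^*)\geq 0$, i.e., condition (ii). If instead $\psi(u^*)<0$, then along the segment $u(\tau)=u^*+\tau(u-u^*)$ some $\tau^*\in(0,1]$ yields $\psi(u(\tau^*))=0$; combined with $\phi(u(\tau^*))\leq 0$, the decomposition $\phi=\beta^2-\psi^2$ forces $\beta(u(\tau^*))=0$ and $\phi(u(\tau^*))=0$. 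But strict convexity of $\phi$ together with $u\neq u^*$ yields $\phi(u^*)<0$, so convexity in $\tau$ gives $\phi(u(\tau^*))\leq(1-\tau^*)\phi(u^*)+\tau^*\phi(u)<0$, contradicting $\phi(u(\tau^*))=0$.

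The remaining case $\bar{\lambda}_{l,i^*(l)}(x)=0$ is the main obstacle I anticipate: $\bar{F}_{l,i^*(l)}$ is rank-deficient so the strict-convexity argument is unavailable, while the hypothesis $\bar{Q}_l\bar{Q}_l^T\succ 0$ keeps $u_{\beta\min}=-(\bar{Q}_l\bar{Q}_l^T)^{-1}\bar{Q}_l\bar{r}_l^T$ well defined as the unconstrained minimizer of $\beta$. My plan is to transport the feasible $u$ along the null direction of $\bar{F}_{l,i^*(l)}$, along which $\phi$ is affine, so as to reduce $\beta(u)$ to $\beta(u_{\beta\min})$ while maintaining $\phi\leq 0$; this yields $\psi(u_{\beta\min})\geq\beta(u_{\beta\min})\geq 0$, and the strict inequality in (iii) is then obtained by excluding the simultaneous equalities $\psi(u_{\beta\min})=\beta(u_{\beta\min})=0$ via a direct check using the structure of $\bar{H}_{l,i^*(l)}$. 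The degeneracy here---absent strict convexity, the ellipsoid of case (ii) becomes an unbounded cylinder---is what will require the most care.
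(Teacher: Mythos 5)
Your reduction of \eqref{eq:dro-clf-cbf-socp} to a per-sample second-order cone constraint is exactly the paper's first step (minimize the piecewise-linear function of $t$ using $\epsilon\leq\tfrac1N$, obtain the max-over-samples constraint, then weaken it to a single sample), but where the paper then simply invokes the cited characterization of feasibility of a single second-order cone constraint \cite[Theorem 2]{FC-JJC-BZ-CJT-KS:21}, you attempt to re-derive that characterization, and this is where problems appear. The squaring step and case (i) are fine. Case (ii) is essentially right but has a hole at $\tau^*=1$: if the feasible point $u$ itself has $\psi(u)=0$, then $\beta(u)=0$ and $\phi(u)=0$, so your final bound $(1-\tau^*)\phi(u^*)+\tau^*\phi(u)$ equals $0$, not a strictly negative number, and no contradiction results. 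This is patchable: $\beta(u)=\psi(u)=0$ forces $\nabla\phi(u)=2\bigl(\bar{Q}_l(\bar{r}_l^T+\bar{Q}_l^Tu)-\bar{w}_{l,i}(\bar{v}_{l,i}+\bar{w}_{l,i}^Tu)\bigr)=0$, so $u=u^*$ by strict convexity, contradicting $\psi(u^*)<0=\psi(u)$.

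The genuine gap is case (iii), which you only sketch, and the sketch would fail on both of its steps. First, transporting the feasible point along the null direction of $\bar{F}_{l,i}$ does not reduce $\beta$ to $\beta(u_{\beta\min})$: that direction is unrelated to the least-squares minimizer $u_{\beta\min}=-(\bar{Q}_l\bar{Q}_l^T)^{-1}\bar{Q}_l\bar{r}_l^T$, and the minimum of $\beta$ along that line is in general strictly larger than $\beta(u_{\beta\min})$. (If you only want $\psi(u_{\beta\min})\geq 0$, a cleaner route is the orthogonal decomposition $\bar{r}_l^T+\bar{Q}_l^Tu_0=(\bar{r}_l^T+\bar{Q}_l^Tu_{\beta\min})+\bar{Q}_l^T(u_0-u_{\beta\min})$, whose first term is orthogonal to the range of $\bar{Q}_l^T$, combined with $|\bar{w}_{l,i}^Tz|\leq\norm{\bar{Q}_l^Tz}$, valid since $\bar{F}_{l,i}\succeq 0$.) Second, and more seriously, the strict inequality in (iii) cannot be obtained by ``excluding the simultaneous equalities $\psi(u_{\beta\min})=\beta(u_{\beta\min})=0$'' from feasibility alone, because that configuration is itself feasible: with $m=k=1$ and $\bar{Q}_l=\bar{w}_{l,i}=\bar{r}_l=\bar{v}_{l,i}=1$ the constraint reads $|1+u|\leq 1+u$, which is feasible, has $\bar{\lambda}_{l,i}=0$ and $\bar{Q}_l\bar{Q}_l^T$ invertible, yet $\bar{v}_{l,i}-\bar{w}_{l,i}^T(\bar{Q}_l\bar{Q}_l^T)^{-1}\bar{Q}_l\bar{r}_l^T=0$. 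So no ``direct check'' based on feasibility can rule this boundary case out; handling this degenerate situation is precisely what the paper outsources to \cite[Theorem 2]{FC-JJC-BZ-CJT-KS:21}, and your proposal does not close it.
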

\begin{proof}
Note that~\eqref{eq:dr-general} (and hence~\eqref{eq:dro-clf-cbf-socp}) is equivalent to
\begin{align}\label{eq:dro-clf-cbf-socp-inf}
    &\min\limits_{\ubar{u}\in\ubar{\Uc}} \norm{\ubar{u}-\ubar{k}(x)}^2 \\
    \notag
    &\text{s.t.} \ r\norm{R_l^T(x)\ubar{u}} + \inf_{t\in\real} \Big[ \frac{1}{N}\sum_{i=1}^N (G_l(x,\ubar{u},\boldsymbol{\xi}_i)+t)_{+} -t\epsilon \Big] \leq 0,
\end{align}
for $l \in \until{M}$,
cf.~\cite[Proposition IV.1]{KL-YY-JC-NA:23-acc}.
For $(x,\ubar{u})\in\real^n\times\ubar{\Uc}$,
the function $A_{x,\ubar{u}}^l(t)=\frac{1}{N}\sum_{i=1}^N (G_l(x,\ubar{u},\boldsymbol{\xi}_i)+t)_{+} -t\epsilon$ is a piecewise linear function in $t$.
Since $\epsilon\leq\frac{1}{N}$,
it is decreasing for $t<t_l^*(x,\ubar{u}):=\min_{i\in[N]} -G_l(x,\ubar{u},\boldsymbol{\xi}_i)$
and increasing for $t>t_l^*(x,\ubar{u})$.
Hence, it achieves its minimum at $t_l^*(x,\ubar{u})$.
Thus,~\eqref{eq:dro-clf-cbf-socp} is feasible if and only if for all $l\in[M]$ the following inequalities are simultaneously feasible:
\begin{align}\label{eq:max-soc-data}
r\norm{R_l^T(x)\ubar{u}}+\epsilon\ubar{u}^T q_l(x)+\epsilon \max_{i\in[N]} \ubar{u}^T R_l(x) \boldsymbol{\xi}_i \leq 0.
\end{align}
Note that, 
if for some $l\in[M]$, the constraint $r\norm{R_l^T(x)\ubar{u}}+\epsilon \ubar{u}^T q_l(x)+\epsilon \ubar{u}^T R_l(x)\boldsymbol{\xi}_i\leq0$ is infeasible for all $i\in[N]$, then~\eqref{eq:dro-clf-cbf-socp} is infeasible.
Note that this is only a sufficient, but not necessary, condition for infeasibility (or equivalently, a necessary, but not sufficient, condition for feasibility).
The result follows from~\cite[Theorem 2]{FC-JJC-BZ-CJT-KS:21}, which characterizes the feasibility of a single second-order cone constraint.
\end{proof}

Next, we state a sufficient condition for the feasibility of \eqref{eq:dro-clf-cbf-socp} in the case $M=1$.

\begin{proposition}\longthmtitle{Sufficient condition for feasibility of DRO-SOCP with one constraint}\label{prop:suff-cond-one-constr-data}
Let $r > 0$,  $M=1$, and $0<\epsilon\leq\frac{1}{N}$.
Given $x\in\real^n$, define
\begin{align*}
    &\hat{Q}(x) = (r+\epsilon \max_{i\in[N]} \norm{\boldsymbol{\xi}_i} ) R_1(x)_{2:(m+1)} \in\real^{m\times k}, \\
    &\hat{r}(x) = (r+\epsilon \max_{i\in[N]} \norm{\boldsymbol{\xi}_i} ) R_1(x)_1 \in\real^{1\times k}, \\
    &\hat{w}(x) = -\epsilon q_1(x)_{2:(m+1)} \in\real^m, \ \hat{v}(x) = -\epsilon q_1(x)_{1} \in\real, \\
    &\hat{F}(x) = Q(x) Q(x)^T-w(x) w(x)^T \in\real^{m\times m}, \\
    &\hat{J}(x) = \hat{r}(x) \hat{Q}(x)^T - \hat{v}(x)\hat{w}(x)^T  \in \real^{1\times m}, \\
    &\hat{H}(x) = \begin{pmatrix}
         (\hat{r} \hat{r}^T - \hat{v}^2)(x) & \hat{J}(x) \\
        \hat{J}(x)^T & \hat{F}(x)
    \end{pmatrix} \in \real^{ (m+1)\times(m+1) }.
\end{align*}
Let $\hat{\lambda}(x)$ be the minimum eigenvalue of $\hat{F}(x)$. Suppose that $Q(x)Q(x)^T$ is invertible, $\hat{H}(x)$ is not positive definite and one of the following holds:
\begin{enumerate}
    \item $\hat{\lambda}(x)<0$,
    \item $\hat{\lambda}(x)>0$ and $\big( \hat{v}-\hat{w}^T \hat{F}^{-1}( \hat{Q} \hat{r}^T - \hat{w} \hat{v} \big)(x)\geq0$,
    \item $\hat{\lambda}(x)=0$ and $\big(\hat{v}-\hat{w}^T(\hat{Q}\hat{Q}^T)^{-1} \hat{Q} \hat{r}^T\big)(x)>0$.
\end{enumerate}
Then,~\eqref{eq:dro-clf-cbf-socp} is feasible at $x$.
\end{proposition}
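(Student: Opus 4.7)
The plan is to reduce the distributionally robust problem to a single second-order cone constraint in $\ubar{u}$ and then invoke the same characterization \cite[Theorem 2]{FC-JJC-BZ-CJT-KS:21} already used in Proposition \ref{prop:nec-cond-feasibility-1-constraint}. The starting point is the equivalent reformulation established in that earlier proof: since $\epsilon\le 1/N$, problem \eqref{eq:dro-clf-cbf-socp} with $M=1$ is feasible at $x$ if and only if there exists $\ubar u\in\ubar{\Uc}$ with
\begin{equation*}
r\norm{R_1^T(x)\ubar u}+\epsilon\,\ubar u^T q_1(x)+\epsilon\max_{i\in[N]}\ubar u^T R_1(x)\boldsymbol{\xi}_i\le 0.
\end{equation*}

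The key relaxation is Cauchy--Schwarz: for every $i\in[N]$,
\begin{equation*}
\ubar u^T R_1(x)\boldsymbol{\xi}_i\le \norm{R_1^T(x)\ubar u}\,\norm{\boldsymbol{\xi}_i}\le \norm{R_1^T(x)\ubar u}\,\max_{j\in[N]}\norm{\boldsymbol{\xi}_j}.
\end{equation*}
Substituting this upper bound for the max term gives the single inequality
\begin{equation*}
\bigl(r+\epsilon\max_{i\in[N]}\norm{\boldsymbol{\xi}_i}\bigr)\,\norm{R_1^T(x)\ubar u}+\epsilon\,\ubar u^T q_1(x)\le 0,
\end{equation*}
whose feasibility is therefore a \emph{sufficient} condition for the feasibility of \eqref{eq:dro-clf-cbf-socp} at $x$.

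The next step is to recognize this inequality as a standard second-order cone constraint in the free variable $u\in\real^m$, where $\ubar u=[1;u]$. Splitting $R_1(x)^T\ubar u=R_1(x)_1^T+R_1(x)_{2:(m+1)}^T u$ and $\ubar u^T q_1(x)=q_1(x)_1+q_1(x)_{2:(m+1)}^T u$, the constraint becomes precisely $\norm{\hat r(x)^T+\hat Q(x)^T u}\le \hat v(x)+\hat w(x)^T u$ with the matrices defined in the statement. At this point the feasibility characterization of a single SOC constraint from \cite[Theorem 2]{FC-JJC-BZ-CJT-KS:21} applies verbatim, using the matrices $\hat F(x)$, $\hat J(x)$, $\hat H(x)$ and the eigenvalue $\hat\lambda(x)$ constructed exactly as required by that theorem; the three listed cases on the sign of $\hat\lambda(x)$ are the three cases of that result. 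This concludes feasibility of the relaxed SOC, which by the sufficient reduction above gives feasibility of \eqref{eq:dro-clf-cbf-socp}.

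The only delicate step is the Cauchy--Schwarz relaxation: it introduces conservatism (this is why the conclusion is only sufficient and the condition depends on the data only through $\max_i\norm{\boldsymbol{\xi}_i}$), but it decouples the discrete maximum over samples from the direction of $\ubar u$, which is what enables the reduction to a single SOC. I expect no other obstacles, as the remaining work is bookkeeping to verify that the partition of $R_1(x)$ and $q_1(x)$ into their first row/entry and remaining rows/entries matches the form required by the cited theorem.
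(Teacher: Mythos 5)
Your proposal is correct and follows essentially the same route as the paper's proof: reduce to the single inequality \eqref{eq:max-m=1-socp} via the CVaR minimizer argument with $\epsilon\leq\frac{1}{N}$, apply Cauchy--Schwarz to obtain the conservative constraint \eqref{eq:max-norm-soc-suff-cond}, and then invoke \cite[Theorem 2]{FC-JJC-BZ-CJT-KS:21} on the resulting single second-order cone constraint with the hatted matrices. No gaps; your explicit bookkeeping of the split of $R_1(x)$ and $q_1(x)$ into first and remaining rows is just a spelled-out version of what the paper leaves implicit.
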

\begin{proof}
By repeating an argument similar to the one in the proof of Proposition~\ref{prop:nec-cond-feasibility-1-constraint},~\eqref{eq:dro-clf-cbf-socp} is feasible in the case $M=1$ if and only if the following inequality is feasible:
\begin{align}\label{eq:max-m=1-socp}
    r\norm{R(x)^T \ubar{u}}+\epsilon\ubar{u}^T q(x)+\epsilon \max_{i\in[N]} \ubar{u}^T R(x) \boldsymbol{\xi}_i \leq 0.
\end{align}
Using the Cauchy-Schwartz inequality, the following inequality being feasible implies that~\eqref{eq:max-m=1-socp} is feasible,
\begin{align}\label{eq:max-norm-soc-suff-cond}
    (r+\epsilon \max_{i\in[N]}\norm{\boldsymbol{\xi}_i})\norm{R(x)^T \ubar{u} }+\epsilon \ubar{u}^T q(x) \leq 0.
\end{align}
If~\eqref{eq:max-norm-soc-suff-cond} is feasible, there exists $\hat{\ubar{u}}$ such that $r\norm{\hat{\ubar{u}}^T R(x)}+\epsilon\hat{\ubar{u}}^T q(x)+\epsilon \hat{\ubar{u}}^T R(x) \boldsymbol{\xi}_i \leq 0$ for all $i\in[N]$, and thus $\hat{\ubar{u}}$ satisfies~\eqref{eq:max-m=1-socp}.
The result follows by \cite[Thm.~2]{FC-JJC-BZ-CJT-KS:21}.
\end{proof}

\begin{remark}\longthmtitle{More data leads to better feasibility guarantees}\label{rem:more-data-better-feas}
     For a fixed $r$, the addition of new data points (larger $N$) implies that there are more chances that either of (i)-(iii) in Proposition~\ref{prop:nec-cond-feasibility-1-constraint} are satisfied for each $l \in \until{M}$.  Moreover, if $\mathbb{P}^*$ is light-tailed, $r_N(\bar{\epsilon})$ decreases with~$N$. The choice $r=r_N(\bar{\epsilon})$ means that for each fixed $i\in[N]$ and $l\in[M]$, the feasible set of the inequality $r\norm{R_l(x)^T \ubar{u}}+\epsilon \ubar{u}^T q_l(x)+\epsilon \ubar{u}^T R_l(x)\boldsymbol{\xi}_i\leq0$ increases,
     which from the proof of Proposition~\ref{prop:nec-cond-feasibility-1-constraint}, also means that there are more chances that either of (i)-(iii) are met. Similarly, under the assumption that the norm of additional samples is upper bounded by $\max_{i\in[N]}\norm{\boldsymbol{\xi}_i}$, the choice $r=r_{N}(\bar{\epsilon})$ also leads to a larger feasible set of~\eqref{eq:max-norm-soc-suff-cond}
    and thus the sufficient condition in Proposition~\ref{prop:suff-cond-one-constr-data} has more chances of being satisfied.
    \demo
\end{remark}

We next give a sufficient condition for the feasibility of~\eqref{eq:dro-clf-cbf-socp} with high probability for an arbitrary number of constraints.

\begin{proposition}\longthmtitle{Sufficient condition for feasibility of DRO-SOCP}\label{prop:suff-cond-feas-dro-socp}
    Let $r>0$, $\epsilon\in(0,1)$ and $\bar{\epsilon}\in(0,1)$.
    Suppose that there exists a controller $\hat{k}:\real^n\to\ubar{\Uc}$ and non-negative functions $S_l:\real^n\to\real_{\geq0}$ for $l\in[M]$ satisfying
    \begin{align}\label{eq:cvar-strict-feasibility}
        \CVaR_{1-\epsilon}^{\mathbb{P}^*}(G_l(x,\hat{k}(x),\boldsymbol{\xi})) &\leq -S_l(x), \quad \forall l\in[M].
    \end{align}
    Moreover, suppose that $\mathbb{P}^*$ is light-tailed and let $r_N(\bar{\epsilon})$ be defined as in~\eqref{eq:wasserstein_r_guarantee}.
    Let $x\in\real^n$ be such that $\norm{R_l(x)}\neq0$ for all $l\in[M]$, and let $B:\real^n\to\real_{\geq0}$ be an upper bound on the norm of $\hat{k}$.
    Then, if
    \begin{align}~\label{eq:suff-cond-feas-dro}
        r_{N}(\bar{\epsilon}) < \min\limits_{l\in[M]} \frac{\epsilon S_l(x)}{2\norm{R_l(x)}B(x)},
    \end{align}
    ~\eqref{eq:dro-clf-cbf-socp} is strictly feasible at $x$ with probability at least $1-\bar{\epsilon}$ for any $r\leq r_{N}(\bar{\epsilon})$.
\end{proposition}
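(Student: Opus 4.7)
The plan is to show that, on the high-probability event $\mathbb{P}^*\in\Mc_N^{r_N(\bar{\epsilon})}$ afforded by Remark~\ref{rem:choice-r}, the candidate controller $\hat{k}(x)$ is itself a strictly feasible point of~\eqref{eq:dro-clf-cbf-socp} at $x$. By the same CVaR reformulation used in~\eqref{eq:dro-clf-cbf-socp-inf}, the $l$-th constraint of~\eqref{eq:dro-clf-cbf-socp} at $\ubar{u}=\hat{k}(x)$ is equivalent to $\sup_{\mathbb{P}\in\Mc_N^r}\CVaR_{1-\epsilon}^{\mathbb{P}}(G_l(x,\hat{k}(x),\boldsymbol{\xi}))\leq 0$, so the task reduces to upper-bounding this supremum using the assumed slack $\CVaR_{1-\epsilon}^{\mathbb{P}^*}(G_l(x,\hat{k}(x),\boldsymbol{\xi}))\leq -S_l(x)$ at the true distribution.

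First I would condition on $W_1(\mathbb{P}^*,\hat{\mathbb{P}}_N)\leq r_N(\bar{\epsilon})$, which by Remark~\ref{rem:choice-r} occurs with probability at least $1-\bar{\epsilon}$. For any $r\leq r_N(\bar{\epsilon})$ and any $\mathbb{P}\in\Mc_N^r$, the triangle inequality for $W_1$ gives $W_1(\mathbb{P},\mathbb{P}^*)\leq W_1(\mathbb{P},\hat{\mathbb{P}}_N)+W_1(\hat{\mathbb{P}}_N,\mathbb{P}^*)\leq 2r_N(\bar{\epsilon})$. Next I would invoke the Wasserstein--CVaR reformulation underpinning~\cite[Proposition IV.1]{KL-YY-JC-NA:23-acc}, but with the ball recentered at $\mathbb{P}^*$ rather than $\hat{\mathbb{P}}_N$: since $\boldsymbol{\xi}\mapsto G_l(x,\hat{k}(x),\boldsymbol{\xi})$ is affine with Lipschitz constant $\norm{R_l(x)^T\hat{k}(x)}$, this delivers
\[
\CVaR_{1-\epsilon}^{\mathbb{P}}(G_l(x,\hat{k}(x),\boldsymbol{\xi}))\leq \CVaR_{1-\epsilon}^{\mathbb{P}^*}(G_l(x,\hat{k}(x),\boldsymbol{\xi}))+\frac{2 r_N(\bar{\epsilon})\,\norm{R_l(x)^T\hat{k}(x)}}{\epsilon}
\]
uniformly over $\mathbb{P}\in\Mc_N^r$. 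Combining this with the assumed bound on $\CVaR^{\mathbb{P}^*}$ and the Cauchy--Schwarz estimate $\norm{R_l(x)^T\hat{k}(x)}\leq\norm{R_l(x)}B(x)$ yields $\CVaR_{1-\epsilon}^{\mathbb{P}}(G_l(x,\hat{k}(x),\boldsymbol{\xi}))\leq -S_l(x)+\frac{2\norm{R_l(x)}B(x)\,r_N(\bar{\epsilon})}{\epsilon}$, which is strictly negative exactly under~\eqref{eq:suff-cond-feas-dro}. Taking the supremum over $\mathbb{P}\in\Mc_N^r$ and minimum over $l\in[M]$ then confirms strict feasibility of $\hat{k}(x)$.

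The main obstacle is justifying the recentered Wasserstein--CVaR inequality, or equivalently the Lipschitz continuity of the map $\mathbb{P}\mapsto \CVaR_{1-\epsilon}^{\mathbb{P}}(G_l(x,\hat{k}(x),\boldsymbol{\xi}))$ in the $1$-Wasserstein metric with modulus $\norm{R_l(x)^T\hat{k}(x)}/\epsilon$. I would derive it by applying Kantorovich--Rubinstein duality to the $1$-Lipschitz map $\boldsymbol{\xi}\mapsto (G_l(x,\hat{k}(x),\boldsymbol{\xi})+t)_+/\norm{R_l(x)^T\hat{k}(x)}$ for each fixed $t\in\real$, followed by a sup/inf interchange and the observation that the infimum over $t$ in~\eqref{eq:cvar-convex} preserves Lipschitz moduli; the degenerate case $R_l(x)^T\hat{k}(x)=0$ renders the inequality trivial. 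Everything else is routine triangle-inequality bookkeeping plus the hypothesis $\norm{R_l(x)}\neq 0$, which ensures the denominator in~\eqref{eq:suff-cond-feas-dro} is well-defined.
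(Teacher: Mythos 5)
Your proposal is correct and follows essentially the same route as the paper: condition on $\mathbb{P}^*\in\Mc_N^{r_N(\bar{\epsilon})}$, use Kantorovich--Rubinstein duality to bound the change in the CVaR objective by $\tfrac{1}{\epsilon}\norm{R_l(x)}\norm{\hat{k}(x)}$ times the Wasserstein distance (at most $2r_N(\bar{\epsilon})$ within the ball), and conclude that $\hat{k}(x)$ itself is strictly feasible under~\eqref{eq:suff-cond-feas-dro}. The only cosmetic difference is that the paper works one-sidedly with the fixed minimizer $t_1^*$ of the CVaR under $\mathbb{P}^*$ and the bound $\CVaR_{1-\epsilon}^{\hat{\mathbb{P}}}\leq\mathbb{E}_{\hat{\mathbb{P}}}[\hat{G}]$, whereas you assert full $W_1$-Lipschitz continuity of the CVaR map via the infimum over $t$; both rest on the same duality estimate.
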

\begin{proof}
Note that by definition, the first component of $\hat{k}(x)$ is $1$ for all $x\in\real^n$. Hence, $B(x)\geq\norm{\hat{k}(x)}\geq1$ for all $x\in\real^n$ so~\eqref{eq:suff-cond-feas-dro} is well-defined.
Let $t_1^*\in\real$ be such that
\begin{align*}
    \CVaR_{1-\epsilon}^{\mathbb{P}^*}(G_1(x,\ubar{u},\boldsymbol{\xi})) = \frac{1}{\epsilon}\mathbb{E}_{\mathbb{P}^*}[(G_1(x,\ubar{u},\boldsymbol{\xi})+t_1^*)_{+}] - t_1^*,
\end{align*}
and define $\hat{G}(x,\boldsymbol{\xi})=\frac{1}{\epsilon}(G_1(x,\hat{k}(x),\boldsymbol{\xi})+t_1^*)_{+} -t_1^*$.
Note that for any $\boldsymbol{\xi}, \boldsymbol{\xi^{'}}\in\real^k$,
\begin{align}\label{eq:lipschitzness}
\hspace{-0.3cm}
|\hat{G}(x,\boldsymbol{\xi}) \! - \!\hat{G}(x,\boldsymbol{\xi}^{'})|
\! \leq \! \frac{1}{\epsilon} \norm{R_1(x)} \! \cdot \! \norm{\hat{k}(x)} \! \cdot \! \norm{ \boldsymbol{\xi}-\boldsymbol{\xi^{'}}},
\end{align}
where we have used the fact that the operator $(\cdot)_{+}$ is Lipschitz with constant 1. Using~\eqref{eq:lipschitzness} in~\cite[Theorem 3.2]{PME-DK:18}, we conclude that for any $\mathbb{\hat{P}}\in\Pc_p(\Xi)$,
$|\mathbb{E}_{\mathbb{P}^*} \! (\hat{G}(x,\boldsymbol{\xi})) \! - \! \mathbb{E}_{\hat{\mathbb{P}}}(\hat{G}(x,\boldsymbol{\xi}))| \! \leq \! \frac{1}{\epsilon} \norm{R_1 (x)} \! \cdot \! \norm{\hat{k}(x)} \! \cdot \! W_1 (\mathbb{P}^* \! , \! \mathbb{\hat{P}})$.

From~\eqref{eq:suff-cond-feas-dro}, together with the fact that $\Mc_{N}^{r_{N}(\bar{\epsilon})}$ contains $\mathbb{P}^*$ with probability at least $1-\bar{\epsilon}$, cf. Remark~\ref{rem:choice-r}, and since the maximum Wasserstein distance between two distributions in $\Mc_N^{r_N(\bar{\epsilon})}$ is $2r_N(\bar{\epsilon})$, with probability at least $1-\bar{\epsilon}$,
\begin{align}\label{eq:s1-comparison}
\hspace{-0.2cm} | \! \CVaR_{1-\epsilon}^{\mathbb{P}^*}(G_1(x,\hat{k}(x),\boldsymbol{\xi})) \! - \! \mathbb{E}_{\hat{\mathbb{P}}}(\hat{G}(x,\boldsymbol{\xi}))| <  S_1(x).
\end{align}
for any  $\mathbb{\hat{P}}\in\Mc_{N}^{r_N(\bar{\epsilon})}$.
By definition of $\CVaR$, cf.~\eqref{eq:cvar-convex}, for any $\mathbb{\hat{P}}\in\Pc_p(\Xi)$, $\CVaR_{1-\epsilon}^{\hat{\mathbb{P}}}(G_1(x,\hat{k}(x),\boldsymbol{\xi}))\leq \mathbb{E}_{\hat{\mathbb{P}}}(\hat{G}(x,\boldsymbol{\xi}))$.
Combining this with~\eqref{eq:s1-comparison} and~\eqref{eq:cvar-strict-feasibility}, we get that with probability at least $1-\bar{\epsilon}$, $\CVaR_{1-\epsilon}^{\hat{\mathbb{P}}}(G_1(x,\hat{k}(x),\boldsymbol{\xi})) < 0$ for all
$\mathbb{\hat{P}}\in\Mc_{N}^{r_N(\bar{\epsilon})}$.
This argument holds for $l\in\{2,\hdots,N\}$, implying that $\hat{k}(x)$ is strictly feasible
for~\eqref{eq:dr-general} (and hence,~\eqref{eq:dro-clf-cbf-socp}) with probability at least $1-\bar{\epsilon}$ for any $r\leq r_{N}(\bar{\epsilon})$.
\end{proof}
\begin{remark}
\longthmtitle{Dependency of sufficient condition on slack terms}
Condition~\eqref{eq:cvar-strict-feasibility} on the controller $\hat{k}$ guarantees the satisfaction of the constraints in~\eqref{eq:dr-general} with a slack term $S_l(x)$ on the righthand side. Larger values of these slack terms mean that fewer samples are needed to satisfy \eqref{eq:suff-cond-feas-dro}.
Moreover, for the constraints in~\eqref{eq:cvar-cbc-clc},~\cite[Remark 2.3]{PM-JC:23-auto} 
shows how to obtain such functions $S_l$, even without knowledge of $\hat{k}$.
\demo
\end{remark}

\begin{remark}
\longthmtitle{Applicability of the sufficient condition}\label{rem:applicability-suff-cond}
    Checking condition~\eqref{eq:suff-cond-feas-dro} does not require precise knowledge of $\hat{k}$, just an upper bound of its norm. In particular, if bounds on the control norm are included as constraints in~\eqref{eq:dro-clf-cbf-socp}, those can be used to construct~$B$. Moreover, unlike Proposition~\ref{prop:suff-cond-one-constr-data}, condition~\eqref{eq:suff-cond-feas-dro} is agnostic to the samples $\{ \boldsymbol{\xi}_1,\hdots,\boldsymbol{\xi}_N \}$ and instead solely depends on its number~$N$. Note that for each $x\in\real^n$ with $\norm{R_l(x)}\neq0$ for all $l\in[M]$,
    if $S_l(x)>0$ for all $l\in[M]$, there exists $\hat{N}_x$ such that condition~\eqref{eq:suff-cond-feas-dro} holds for all $N\geq\hat{N}_x$. 
    This is because $r_{N}(\bar{\epsilon})$ is decreasing in $N$ and $\lim_{N\to\infty}r_N(\bar{\epsilon})=0$.
    The value $\hat{N}_x$ is state-dependent, larger for smaller values of $\epsilon$, $S_l(x)$ and larger values of $B(x)$. 
    \demo
\end{remark}


\begin{remark}\longthmtitle{Checking for (in)feasibility efficiently}\label{rem:complexity}
    A commonly used algorithm for solving SOCPs is the method in~\cite{MSL-LV-SB-HL:97}. For an SOCP with $r_{S}$ constraints and optimization variable of dimension $n_{S}$, it requires solving $\sqrt{r_{S}}$ linear systems of dimension $n_{S}$, and hence has
    complexity $\mathcal{O}(\sqrt{r_S}n_{S}^3)$, cf.~\cite{ALC:05}.
    Therefore,~\eqref{eq:dro-clf-cbf-socp} has complexity $\mathcal{O}(\sqrt{MN}(m+N)^3)$.
    Instead, since checking the positive definiteness of a symmetric matrix of dimension $n_{P}$ can be done by checking if its Cholesky factorization exists (which has complexity $\mathcal{O}(n_{P}^3)$), the complexity of checking the condition in Proposition~\ref{prop:nec-cond-feasibility-1-constraint} is $\mathcal{O}(NMm^3)$. Hence, for large $N$, it is much more efficient than solving the SOCP~\eqref{eq:dro-clf-cbf-socp} directly. We also note that the scaling in $M$ for the complexity of the SOCP solver is more favorable than that of checking the necessary condition.
    On the other hand, the complexity of checking the sufficient condition in Proposition~\ref{prop:suff-cond-one-constr-data} reduces to finding a maximum of $N$ numbers (which has complexity linear in $N$) and checking the positive definiteness of two symmetric matrices of dimension $m+1$ and $m$, resp. Hence, its complexity is $\mathcal{O}(N+m^3)$, which is also more efficient than solving the SOCP. Finally, note that the complexity of checking the conditions in Proposition~\ref{prop:suff-cond-feas-dro-socp} is constant in $N$ and $m$, and is linear in $M$ due to the minimum in~\eqref{eq:suff-cond-feas-dro}.
    Table~\ref{tab:summary} summarizes this complexity analysis.
    \demo
\end{remark}

\begin{table}[t!]
\centering
\caption{ Complexity of SOCP solver versus the results in this section.}\label{tab:summary}
\begin{tabular}{ |c|c|c|c| } 
 \hline
 Method & Necessary/Sufficient & Complexity & $M$ \\ 
 \hline
 \hline
 Prop.~\ref{prop:nec-cond-feasibility-1-constraint} & Necessary & $\mathcal{O}(NM m^3)$ & any \\ 
 \hline
 Prop.~\ref{prop:suff-cond-one-constr-data} & Sufficient & $\mathcal{O}(N+m^3)$ & $1$ \\ 
 \hline
 Prop.~\ref{prop:suff-cond-feas-dro-socp} & Sufficient & $\mathcal{O}(M)$ & any \\
 \hline
 SOCP solver & Necessary and sufficient & $\mathcal{O}( \sqrt{N M}(m+N)^3 )$ & any \\
 \hline
\end{tabular}
\vspace*{-3ex}
\end{table}
%
%

Proposition~\ref{prop:nec-cond-feasibility-1-constraint} provides necessary conditions for feasibility. If the conditions are not met, it is reasonable to gather more data for verifying feasibility without having to directly solve the program. Moreover, if the conditions in Propositions~\ref{prop:suff-cond-one-constr-data} and~\ref{prop:suff-cond-feas-dro-socp} are not met (which does not mean that~\eqref{eq:dro-clf-cbf-socp} is not feasible),  
this might be an indication that more data is needed to certify feasibility, cf. Remarks~\ref{rem:more-data-better-feas} and~\ref{rem:applicability-suff-cond}. 

\section{Regularity Analysis}\label{sec:regularity-analysis}
In this section, we show that the controller obtained by solving~\eqref{eq:dro-clf-cbf-socp} is point-Lipschitz.

\begin{proposition}\longthmtitle{Point-Lipschitzness of SOCP DRO}\label{prop:lipschitzness-socp}
Let $r>0$, $0 < \epsilon \leq \frac{1}{N}$ and suppose $R_l$ and $q_l$ are twice continuously differentiable for all $l\in[M]$. 
Let $\ubar{u}^*:\real^n\to\real^m$ be the function mapping $x\in\real^n$ to the solution of~\eqref{eq:dro-clf-cbf-socp} in $\ubar{u}$ at~$x$. If~\eqref{eq:dr-general} is strictly feasible at $x_0 \in \real^n$ (i.e., there exists a solution satisfying all the constraints strictly), then
 $\ubar{u}^*$ is point-Lipschitz at~$x_0$.
\end{proposition}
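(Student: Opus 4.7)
The plan is to invoke parametric sensitivity analysis for convex programs, exploiting that~\eqref{eq:dro-clf-cbf-socp} is essentially a parametric strictly convex minimization. First, I would observe that the pair consisting of the objective $y$ and the SOC constraint~\eqref{eq:subeq1-y} is the rotated-cone reformulation of $y \geq \norm{\ubar{u}-\ubar{k}(x)}^2$. Hence, after eliminating the auxiliary variables $t$, $s_i$, $y$, the program is equivalent to minimizing $\norm{\ubar{u}-\ubar{k}(x)}^2$ over $\ubar{u}\in\ubar{\Uc}$ subject to the reduced constraints~\eqref{eq:dro-clf-cbf-socp-inf} derived in the proof of Proposition~\ref{prop:nec-cond-feasibility-1-constraint}. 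Since the objective is strictly convex in $\ubar{u}$ and the feasible set is convex, $\ubar{u}^*(x)$ is uniquely defined whenever the problem is feasible.

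Next, I would use the twice continuous differentiability of $R_l$, $q_l$ together with the smoothness of $\ubar{k}$ to conclude that both the objective and each constraint function $\ubar{u}\mapsto r\norm{R_l^T(x)\ubar{u}} + \epsilon \ubar{u}^T q_l(x) + \epsilon \max_{i\in[N]} \ubar{u}^T R_l(x)\boldsymbol{\xi}_i$ are locally Lipschitz jointly in $(x,\ubar{u})$ on bounded sets, as compositions of smooth maps with the Euclidean norm and a finite maximum. Strict feasibility at $x_0$ provides Slater's constraint qualification for~\eqref{eq:dr-general}, which by the equivalence in~\cite[Proposition IV.1]{KL-YY-JC-NA:23-acc} carries over to~\eqref{eq:dro-clf-cbf-socp}.

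Third, by joint continuity of the constraint functions, any Slater point $\ubar{u}_0$ at $x_0$ remains strictly feasible for every $x$ in some neighborhood $U$ of $x_0$. Consequently $\ubar{u}^*(x)$ is well-defined on $U$ and lies in the sublevel set $\{\ubar{u}:\norm{\ubar{u}-\ubar{k}(x)}\leq\norm{\ubar{u}_0-\ubar{k}(x)}\}$, hence is bounded on $U$. This boundedness combined with the strict convexity of the objective and the joint Lipschitzness of the constraint data sets the stage for a sensitivity estimate.

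Finally, I would invoke a standard parametric sensitivity theorem for strictly convex programs under Slater's condition to deduce that $\ubar{u}^*$ is point-Lipschitz at $x_0$. The main obstacle is handling the non-differentiability of the SOC and max terms at cone boundaries and sample ties; I would circumvent this either by working with the locally smooth squared reformulation of each SOC constraint (valid near a strict Slater point, where the expressions inside the norms are bounded away from the cone apex) or, more directly, by adapting the regularity argument of~\cite{PM-JC:23-auto}, which establishes point-Lipschitzness for an analogous non-distributionally-robust SOCP-based safe stabilizing controller. Since the structural form of the program and its Slater-based sensitivity are identical to the one treated there, the same argument applies once the joint Lipschitz structure of the constraints established above is in hand.
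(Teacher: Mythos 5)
Your reduction of~\eqref{eq:dro-clf-cbf-socp} to the program~\eqref{eq:dro-clf-cbf-socp-inf} (equivalently, to minimizing $\norm{\ubar{u}-\ubar{k}(x)}^2$ subject to $r\norm{R_l^T(x)\ubar{u}}+\epsilon \ubar{u}^T q_l(x)+\epsilon\max_{i\in[N]}\ubar{u}^T R_l(x)\boldsymbol{\xi}_i\leq 0$) is exactly the paper's first step, and the local persistence of the Slater point is also used there. The gap is in your final step: there is no ``standard parametric sensitivity theorem'' that delivers point-Lipschitzness of the argmin map from strict convexity of the objective, Slater's condition, and local Lipschitzness of the constraint functions alone. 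Under those hypotheses the solution map of a parametric convex program is in general only \emph{continuous} (Berge-type arguments); Lipschitz-type estimates require additional structure (strong regularity, second-order/active-set conditions, polyhedrality, or the specific SOCP structure exploited in~\cite{PM-JC:23-auto}). Since establishing Lipschitzness is precisely the content of the proposition, invoking an unnamed sensitivity theorem here is circular. Your fallback of squaring the SOC constraints to smooth them also does not work: strict feasibility means the constraint value is negative at some point, which in no way keeps $R_l^T(x)\ubar{u}$ away from the cone apex at or near the minimizer, so the nonsmoothness of the norm can occur exactly where the sensitivity analysis is needed.

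Your second fallback, ``adapt the argument of~\cite{PM-JC:23-auto} since the structural form is identical,'' skips the one genuinely new difficulty that the paper's proof is built around: the distributionally robust constraint is not a single second-order cone constraint but contains the term $\epsilon\max_{i\in[N]}\ubar{u}^T R_l(x)\boldsymbol{\xi}_i$ (equivalently, $N$ coupled per-sample constraints), and~\cite[Proposition 5.4]{PM-JC:23-auto} applies only to the single-constraint form. The paper resolves this by a local active-sample decomposition: it defines $\Ic$ as the set of maximizing sample indices at $(x_0,\ubar{u}^*(x_0))$, shows by continuity that on a neighborhood the maximizer stays in $\Ic$, introduces for each $i\in\Ic$ the single-sample problem~\eqref{eq:dro-clf-cbf-i} whose solution $\ubar{u}_i^*$ is point-Lipschitz at $x_0$ by~\cite[Proposition 5.4]{PM-JC:23-auto} (strict feasibility persisting locally), and then concludes because $\ubar{u}^*$ locally coincides with one of the finitely many $\ubar{u}_i^*$, so one can take the largest of their Lipschitz constants. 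Without this (or an equivalent) mechanism for handling the sample maximum, your argument does not go through; ``joint Lipschitzness of the constraint data'' is an ingredient, not a substitute for it.
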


\begin{proof}
We first show the result for $M=1$.
Let
    $\Ic:=\argmax{i\in[N]}{G_1(x_0,\ubar{u}^*(x_0),\boldsymbol{\xi}_i)}$,
note that the set $\Ic$ is dependent on $x_0$, but we omit this dependency to simplify the notation.
Note also that since $G_1(x,\ubar{u},\boldsymbol{\xi}_i)$ is continuous in $x$ and $\ubar{u}$ for all $i\in[N]$, there exists a neighborhood $\Nc=\Nc_x\times\Nc_{\ubar{u}} \subset \real^n\times\ubar{\Uc}$ of $(x_0,\ubar{u}^*(x_0))$ such that for all $(\hat{x},\hat{\ubar{u}})\in\Nc$, there exists $i_{\hat{x},\hat{\ubar{u}}}\in\Ic$ such that $i_{\hat{x},\hat{\ubar{u}}}\in\argmax{i\in[N]}{G_1(\hat{x},\hat{\ubar{u}},\boldsymbol{\xi}_i)}$.
Recall from the proof of Proposition~\ref{prop:nec-cond-feasibility-1-constraint} that, for any $x,\ubar{u}\in\real^n\times\ubar{\Uc}$, 
the function $A_{x,\ubar{u}}(t):=\frac{1}{N}\sum_{i=1}^N (G_1(x,\ubar{u},\boldsymbol{\xi}_i)+t)_{+} -t\epsilon$
attains its minimum at $t^*(x,\ubar{u}):=\max_{i\in[N]}{G_1(x,\ubar{u},\boldsymbol{\xi}_i)}$.
Therefore, for $(\hat{x},\hat{\ubar{u}})\in\Nc$, $t^*(\hat{x},\hat{\ubar{u}})=G_1(\hat{x},\hat{\ubar{u}},\boldsymbol{\xi}_{i_{\hat{x},\hat{\ubar{u}}}})$.

For each $i\in\Ic$, let $\ubar{u}_i^*:\real^n\to\real^m$ be defined as:
\begin{align}\label{eq:dro-clf-cbf-i}
    \ubar{u}_i(x) := &\min\limits_{\ubar{u}\in\ubar{\Uc}} \norm{\ubar{u}-\ubar{k}(x)}^2
    \\
    \notag
    &\text{s.t.} \ r\norm{R_1(x)^T \ubar{u} } +\epsilon G_1(x,\ubar{u},\boldsymbol{\xi}_{i}) \leq 0.
\end{align}
Note that since~\eqref{eq:dr-general} is strictly feasible at $x_0$, there exists $\tilde{\ubar{u}}\in\ubar{\Uc}$ such that
$r\norm{ R_1(x_0)^T \ubar{\tilde{u}} } + \max_{i\in[N]} \epsilon G_1(x_0,\tilde{\ubar{u}},\boldsymbol{\xi}_i) < 0$.
By continuity of $R_1$ and $G_1$ in $x$, there exists a neighborhood $\tilde{\Nc}_x\subset\Nc_x$ of $x_0$ such that $r\norm{R_1(x)^T \ubar{\tilde{u}}} + \epsilon G_1(x,\tilde{\ubar{u}},\boldsymbol{\xi}_i) < 0$ for all $x\in\tilde{\Nc}_x$ and $i\in\Ic$.
This implies that~\eqref{eq:dro-clf-cbf-i} is strictly feasible for any $x\in\tilde{\Nc}_x$.
Hence, by~\cite[Proposition 5.4]{PM-JC:23-auto}, $\ubar{u}_i^*$
is point-Lipschitz at $x_0$ for each $i\in\Ic$.
Now, since for all $y\in\Nc_x$ there exists $i\in\Ic$ such that $\ubar{u}^*(y)=\ubar{u}_i^*(y)$, and $\tilde{\Nc}_x\subset\Nc_x$, it follows $\norm{\ubar{u}^*(y)-\ubar{u}^*(x_0)}=\norm{\ubar{u}_i^*(y)-\ubar{u}_i^*(x_0)}\leq \gamma_i \norm{y-x_0}$ for some $\gamma_i>0$.
Now, by taking $\gamma :=\max_{i\in\Ic} \gamma_i$, it follows that $\norm{\ubar{u}^*(y)-\ubar{u}^*(x_0)}\leq \gamma \norm{y-x_0}$ for all $y\in\Nc_x$ and hence $\ubar{u}^*$ is point-Lipschitz at $x_0$.
The argument if $M>1$ is analogous, defining a set $\Ic_l$ similar to $\Ic$ for each $l\in[M]$.
\end{proof}

Proposition~\ref{prop:lipschitzness-socp} implies in particular that $u^*$ is continuous at~$x_0$.
Note also that the strict feasibility assumption in Proposition~\ref{prop:lipschitzness-socp} is satisfied with a prescribed probability if the hypothesis of Proposition~\ref{prop:suff-cond-feas-dro-socp} is satisfied.

\section{Simulations}

\begin{figure*}
    \centering
    \subcaptionbox{\label{fig:1a}}
    {\includegraphics[width=0.33\linewidth]{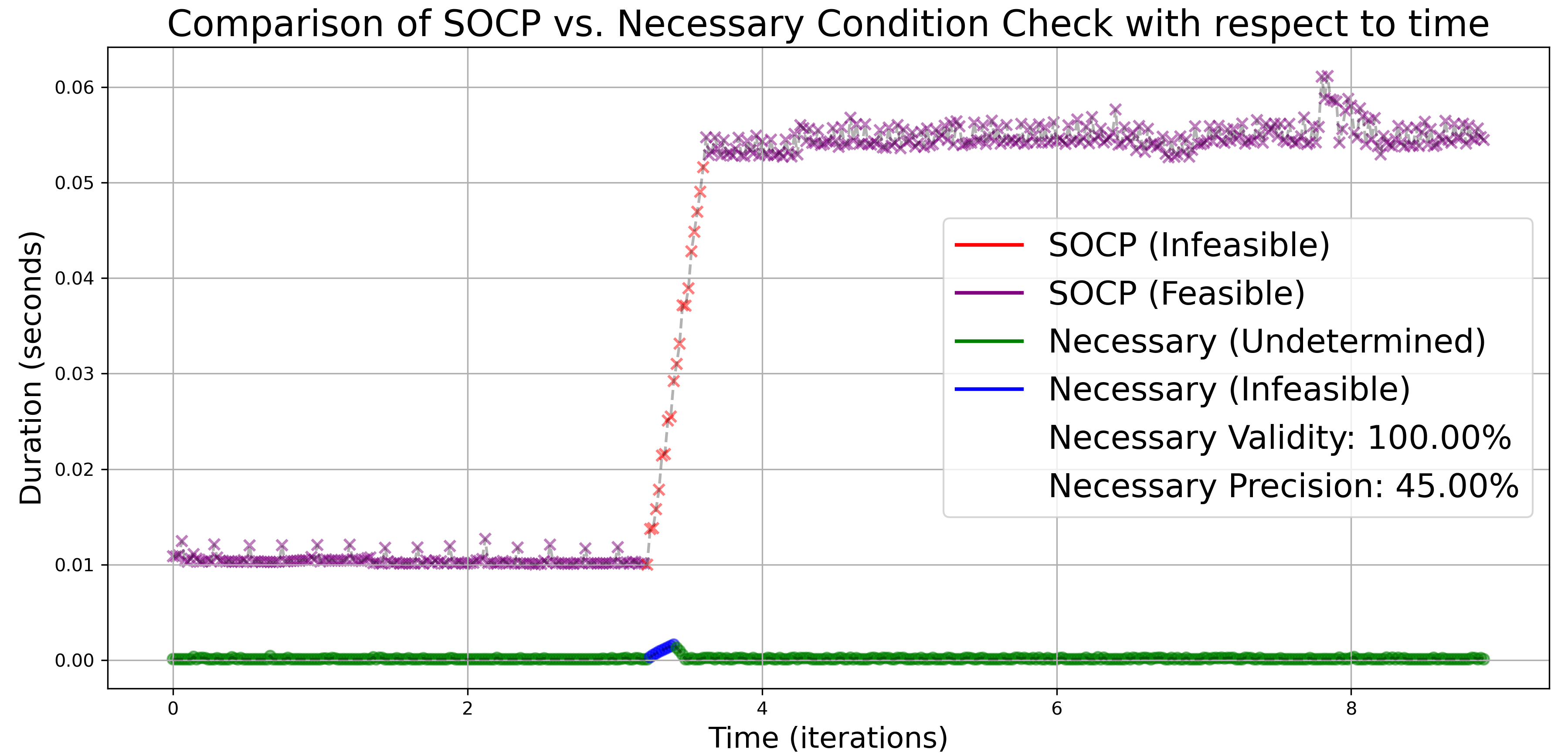}}%
    \subcaptionbox{\label{fig:1b}}
    {\includegraphics[width=0.33\linewidth]{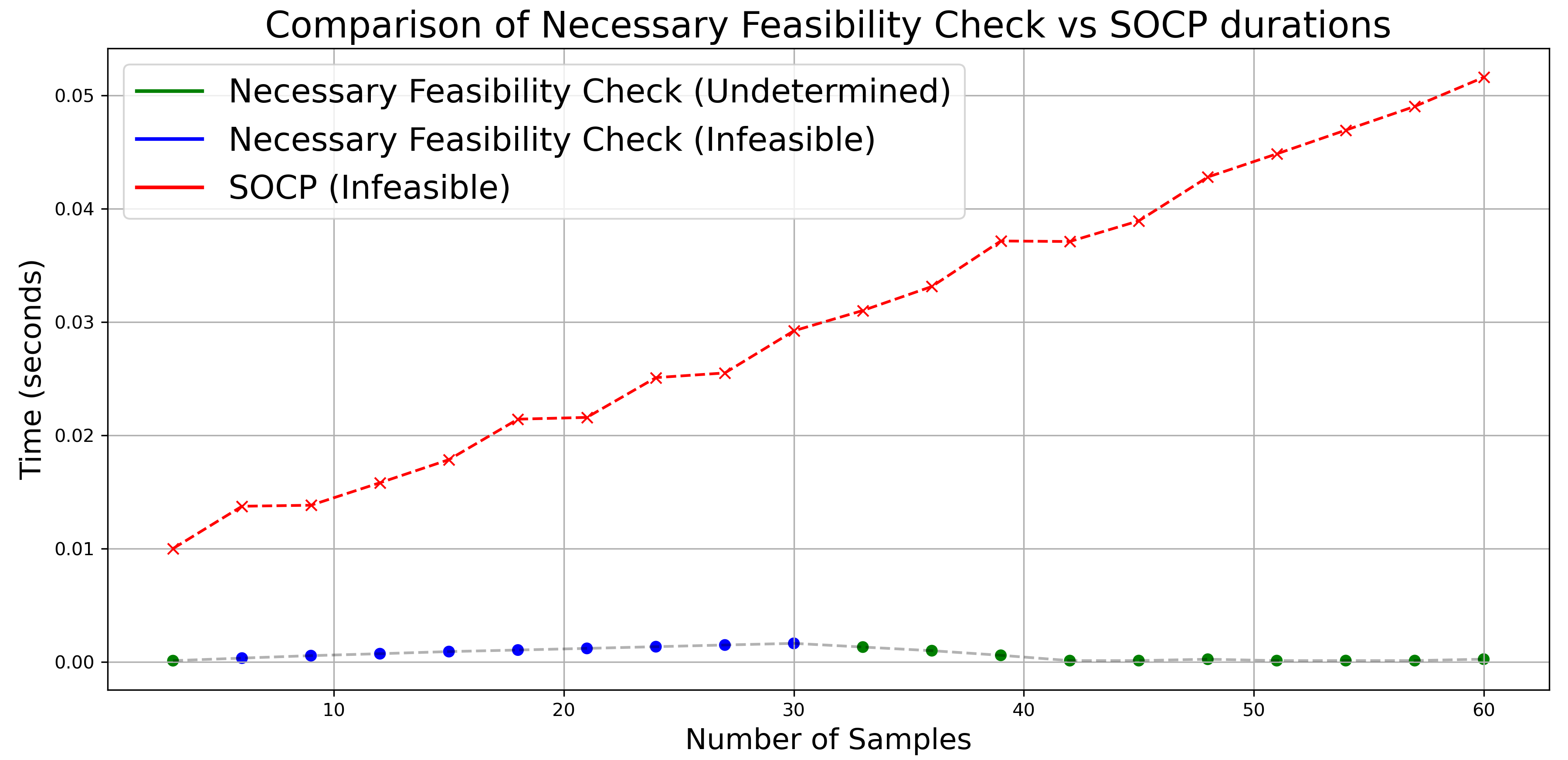}}%
    \subcaptionbox{\label{fig:1c}}
    {\includegraphics[width=0.33\linewidth]{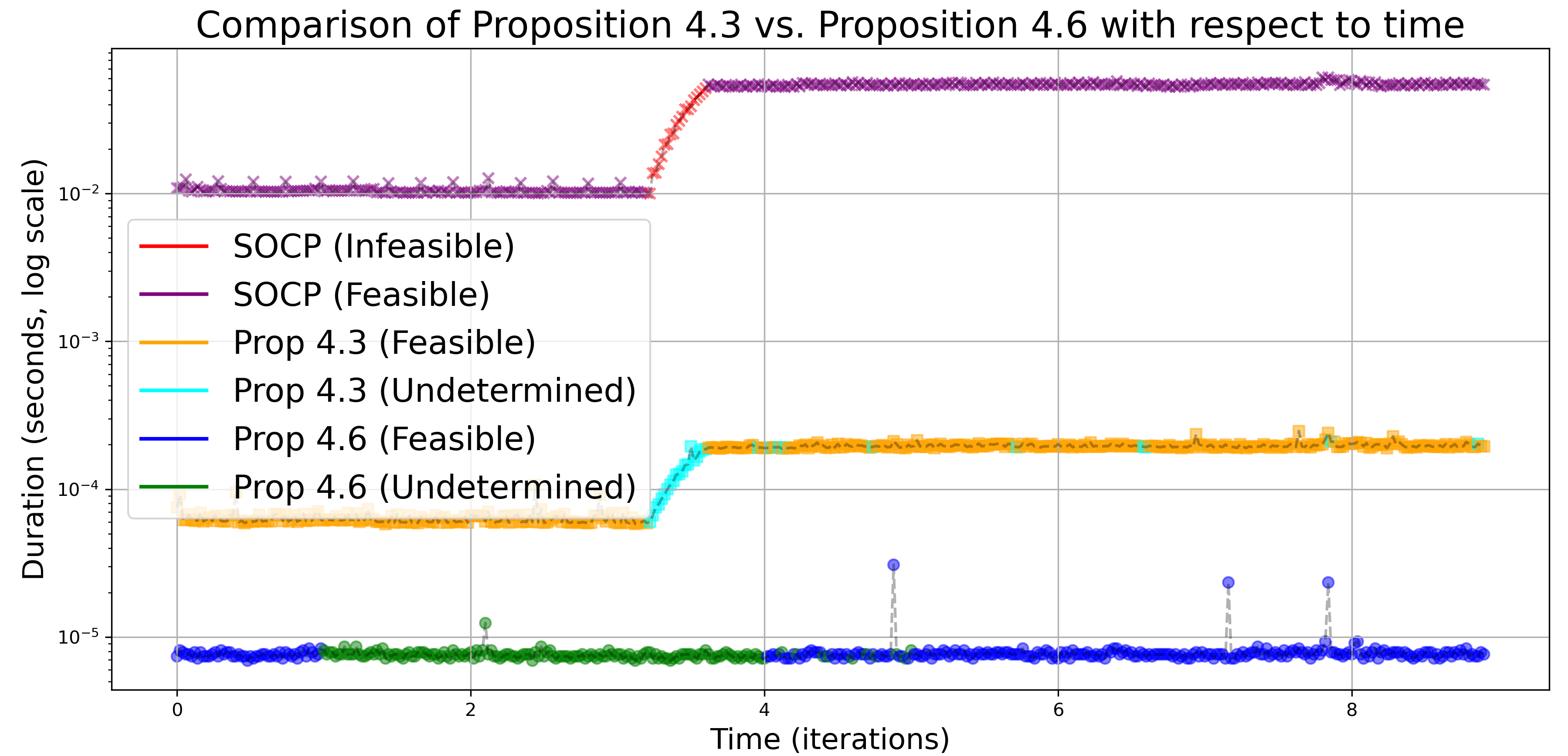}}
    \caption{(a): Time complexity comparison between necessary condition verification 
    (cf. Proposition~\ref{prop:nec-cond-feasibility-1-constraint}) and SOCP solver along the robot 
    trajectory. The label ``undetermined" means that the necessary condition is met, from which we may not know 
    if the problem is feasible or not. The label ``precision" represents the ratio of instances where the necessary 
    condition indicated infeasibility against the total number of instances where the SOCP 
    was actually infeasible. (b): Time complexity of necessary condition verification and 
    SOCP solver with increasing uncertainty samples (constraints). (c): Log-scaled time complexity comparison of two sufficient conditions (cf. Proposition~\ref{prop:suff-cond-one-constr-data} and Proposition~\ref{prop:suff-cond-feas-dro-socp}) with the SOCP solver along the robot trajectory.}
    \label{fig: time_compare}
    \vspace{-3ex}
\end{figure*}

In this section, we evaluate our results in a ground-robot navigation example. We model the robot motion using unicycle kinematics and take a small distance $a = 0.05$ off the wheel axis, cf.\cite{JC-ME:17-jcmsi} to obtain a relative-degree-one model:
\begin{equation*}
\label{eq:model_unicycle}
\begin{bmatrix} \dot{x}_1 \\ \dot{x}_2 \\ \dot{\theta} \end{bmatrix} = \Bigg(\begin{bmatrix} 0 & \cos(\theta) & -a\sin(\theta) \\ 0 & \sin(\theta) & a \cos(\theta) \\ 0 & 0 & 1  \end{bmatrix} + \sum_{j=1}^3 W_j(x)\xi_j) \Bigg) \begin{bmatrix} 1 \\ v \\ \omega\end{bmatrix},
\end{equation*}
where $v$, $\omega$ are the linear and angular velocity, and
\begin{equation*}
{\small
\begin{aligned}
	&W_1(x)
	\!=\!	
	\begin{bmatrix}
		0.02 & 0 & 0\\
		0.02 & 0 & 0\\
		0.01 & 0 & 0
		\end{bmatrix}\!\!,\, W_2(x) =
		\begin{bmatrix}
  		0 & 0 & 0\\
		0 & 0 & 0\\
		0 & 0 & -0.02
		\end{bmatrix}\!\!,\,  \\
        &W_3(x)
		\!=\!	
		\begin{bmatrix}
		0 & 0.02 \cos(\theta) & -0.02 a \sin(\theta) \\
		0 & 0.02 \sin(\theta) & 0.02 a \cos(\theta) \\
		0 & 0 & 0
		\end{bmatrix}\!\!,
\end{aligned}
}
\end{equation*}
represent the model perturbations in the drift, angular velocity, and orientation. We consider uncertainty samples: $\xi_1 \sim \mathcal{N}(0.5, 1)$, $\xi_2 \sim \mathcal{U}(-1, 1) $, and $\xi_3 \sim \mathcal{B}(2, 0.2)$, where $\mathcal{N}$, $\mathcal{U}$, $\mathcal{B}$ denote normal, uniform, and beta distributions, resp. The optimization programs are solved using the Embedded Conic Solver in CVXPY \cite{cvxpy} with an Intel i7 9700K CPU.

We first consider the problem of stabilizing the uncertain unicycle system to a goal position $[x_1^*, x_2^*] = [7,7]$ with initial state $[0,0,0]$,  so we take $M=1$ in~\eqref{eq:dr-general}. At the initial state, the robot is assumed to have $3$ samples $\{\boldsymbol{\xi}_i\}_{i=1}^3$ and initial Wasserstein radius $r=0.5$ with risk tolerance $\epsilon = 0.01$. As the robot moves, each unsuccessful solver attempt prompts the collection of additional  samples, and a~corresponding reduction in the ambiguity radius as prescribed by \eqref{eq:wasserstein_r_guarantee}. In all the figures presented, the $x$-axis represents the simulation timestep, where each timestep is equivalent to $0.02$ seconds, and the $y$-axis denotes the time spent for carrying out the necessary and sufficient condition checks, as well as for running the solver at each timestep.

The time complexity, validity, and precision of Proposition~\ref{prop:nec-cond-feasibility-1-constraint} are explored in Fig.~\ref{fig:1a} and Fig.~\ref{fig:1b}. Fig.~\ref{fig:1a} compares the time complexity of checking the necessary condition in Proposition~\ref{prop:nec-cond-feasibility-1-constraint} and of solving the corresponding SOCP along the whole robot trajectory. Notably, the SOCP becomes infeasible at around $t = 3$ s and more uncertainty samples are given until feasibility is regained. As expected, when Proposition~\ref{prop:nec-cond-feasibility-1-constraint} predicts the program is infeasible, such inference is consistently mirrored by the solver. Fig.~\ref{fig:1b} specifically emphasizes the time complexity during data collection stages. As the number of samples increases, the SOCP's time complexity escalates at a much faster rate than the necessary condition verification, in agreement with Remark~\ref{rem:complexity}. 

Fig.~\ref{fig:1c} compares the time complexity of solving the SOCP and of checking the sufficient conditions in Propositions~\ref{prop:suff-cond-one-constr-data} and~\ref{prop:suff-cond-feas-dro-socp}. As expected, feasibility validation by either result ensures the actual feasibility of the program by the solver. Checking Proposition~\ref{prop:suff-cond-one-constr-data} is more time-consuming than checking Proposition~\ref{prop:suff-cond-feas-dro-socp}, cf. Remark~\ref{rem:complexity},
but has greater accuracy in validating feasibility. Notably, both checks are significantly more efficient than solving the SOCP problem. 

We also consider the safe stabilizing problem of the unicycle system. The stabilization goal is $[x_1^*, x_2^*] = [5,5]$ while the safety goal is to avoid a circular obstacle centered at $[3,2]$ with radius $1$. Fig.~\ref{fig:m_2_plot} compares the time complexity and conservativeness of Proposition~\ref{prop:nec-cond-feasibility-1-constraint} and Proposition~\ref{prop:suff-cond-feas-dro-socp} for the case $M = 2$ in \eqref{eq:dr-general}. Proposition~\ref{prop:nec-cond-feasibility-1-constraint} is valid and requires significantly less time than solving the SOCP, while Proposition~\ref{prop:suff-cond-feas-dro-socp} is also valid and even more efficient.

\begin{figure}
    \centering
    \includegraphics[width = 0.48\textwidth]{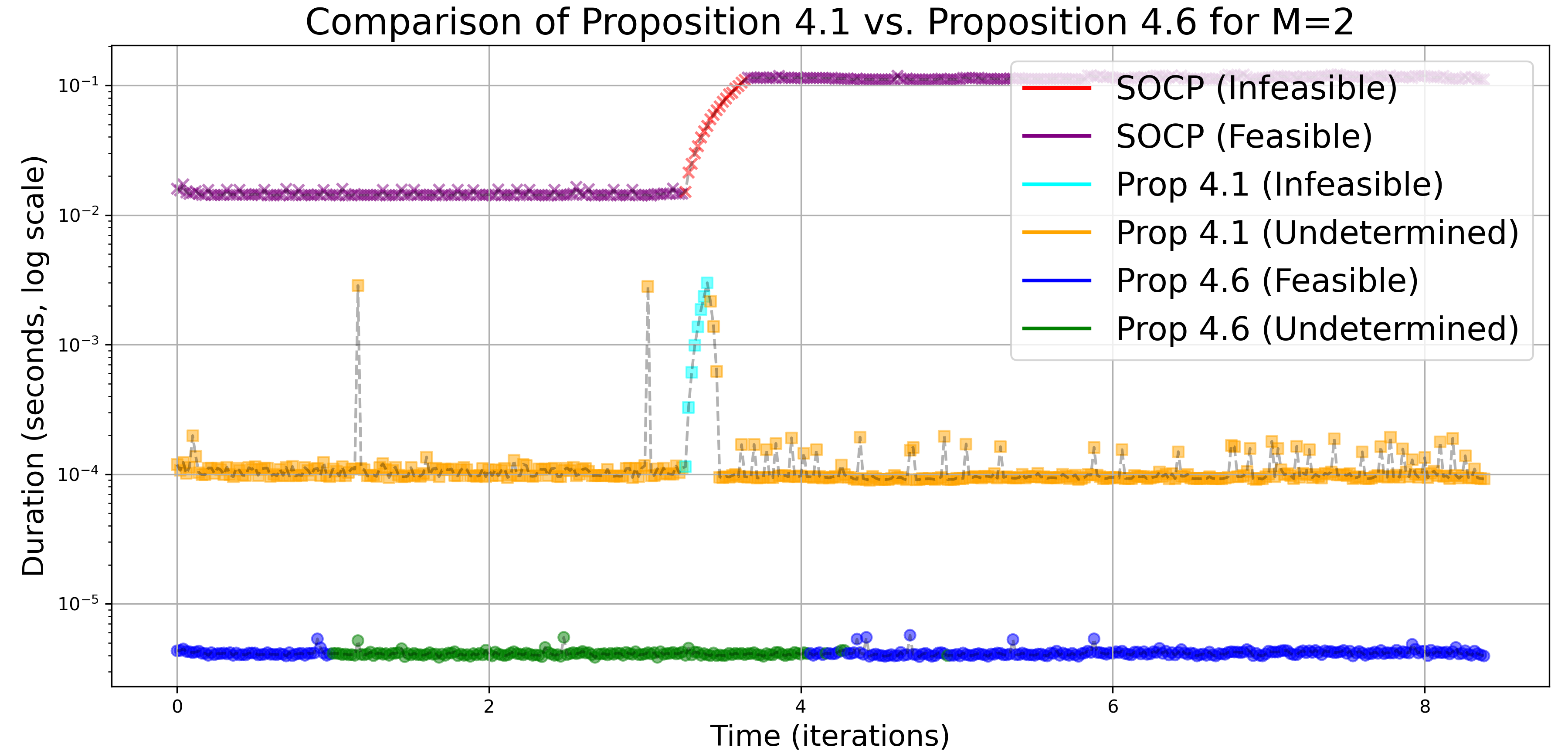}
    \caption{Time complexity comparison of necessary (cf. Proposition~\ref{prop:nec-cond-feasibility-1-constraint}) and sufficient (cf. Proposition~\ref{prop:suff-cond-feas-dro-socp}) conditions.}
    \label{fig:m_2_plot}
    \vspace{-3ex}
\end{figure}

\section{Conclusions}
We studied the feasibility of SOCP problems whose solution provide safe stabilizing controllers for uncertain systems with no prior knowledge of the uncertainty distribution and only a finite number of samples available. We provided a necessary condition and two sufficient conditions to check feasibility, characterized their computational complexity, and showed through simulations their usefulness in practical scenarios. We also showed that the controller obtained by solving the SOCP is point-Lipschitz under fairly general conditions. Future work will consider leveraging the identified feasibility conditions to guide online data-gathering policies that aim to reduce uncertainty about the system dynamics. 


\end{document}